\newcommand{\url}[1]{}
\newcommand{\Px}{ \mathbb{P} }
\newcommand{\Real}{\mathbb R}
\newcommand{\Natural}{\mathbb{N}}
\newcommand{\ii}{\mathrm{i}}
\newcommand{\sgn}{\operatorname{\mathrm{sgn}}}
\def\keywordname{{\bfseries Key words:}}
\def\keywords#1{\par\addvspace\baselineskip\noindent\keywordname\enspace
\ignorespaces#1}
\newtheorem{theorem}{Theorem}[section]
\newtheorem{proposition}[theorem]{Proposition}
\newtheorem{definition}[theorem]{Definition}
\newtheorem{example}[theorem]{Example}
\newtheorem{remark}[theorem]{Remark}
\newtheorem{problem}[theorem]{Problem}
\title{\vspace{-2.5cm} {\bf Consistent single- and multi-step sampling of multivariate arrival times:\\ A characterisation of self-chaining copulas}}
\author{Damiano Brigo\thanks{Corresponding author. Email: {\tt firstname.familyname@kcl.ac.uk}; we are grateful to Jan-Frederik Mai for helpful correspondence that helped us improving the paper. We are also grateful to Alexander McNeil for a helpful discussion and references on Extreme Value Copulas.}  \ \ \ \ \ \ \  Kyriakos Chourdakis\\
Dept. of Mathematics, King's College, London 
}
\date{First version: November 20, 2007. This version: \today}
\begin{document}

\maketitle

\vspace{-1cm}

\begin{abstract} 
\noindent This paper deals with dependence across marginally exponentially distributed arrival times, such as default times in financial modeling or inter-failure times in reliability theory. We explore the relationship between dependence and the possibility to sample final multivariate survival in a long time-interval as a sequence of iterations of local multivariate survivals along a partition of the total time interval. We find that this is possible under a form of multivariate lack of memory that is linked to a property of the survival times copula. This property defines a ``self-chaining-copula", and we show that this coincides with the extreme value copulas characterization.  The self-chaining condition is satisfied by the Gumbel-Hougaard copula, a full characterization of self chaining copulas in the Archimedean family, and by the Marshall-Olkin copula.  The result has important practical implications for consistent single-step and multi-step simulation of multivariate arrival times in a way that does not destroy dependency through iterations, as happens when inconsistently iterating a Gaussian copula. 
\end{abstract}
\keywords{Dependence Modeling, Arrival Times, Sampling, 
Archimedean Copula, Gumbel-Hougaard Copula, Marshall-Olkin Copula, Self-Chaining Copula,  Multi-Step Simulation, Extreme Value Copulas, Copula Iteration, Copula Chaining}

\noindent {\bf AMS Classification Codes:} 60E07,
62H05, 62H20, 62H99

\noindent {\bf JEL Classification Codes:} C15, C16


%
%
%
%
%

\thispagestyle{empty}

\tableofcontents

\newpage

\section{Introduction and Motivation}\label{introsec}
This paper deals with dependence across marginally-exponential arrival times, for example default times in financial modeling or inter-failure times in reliability theory. Dependence can be modeled both through a multivariate distribution function consistent with the exponential marginal distributions, or by joining the given exponential marginal distributions through a copula function. After introducing a multivariate lack of memory property, we first present two multivariate exponential distributions that are known in the literature and satisfy such property, and then move to copula functions that are consistent with these distributions and with the lack of memory property in particular. The lack of memory property is important because it allows for consistency between a unique survival sampling at a single large time interval and iterated survival samplings at smaller sub-intervals forming a partition of the whole interval. Since in the industry it is important to have the possibility to decompose a final survival simulation into survival subsimulations, this is also a relevant practical problem. The relevance stems from the need to make the survival simulation time step consistent with the common time step for all other underlying variables or risk factors (e.g. equity prices, interest rates, temperatures, etc). We provide a characterization of "self-chaining" copula functions satisfying the lack of memory property. We consider as fundamental examples the Marshall-Olkin copula and the Gumbel-Hougaard Copula. In the family of Archimedean copulas, we provide a full characterization, showing that the only lack-of-memory and consistent single-step and multi-step Archimedean copula is the Gumbel-Hougaard copula. 

The paper is organized as follows. Section \ref{subsec:CPLM} introduces the common period lack of memory (CPLM) property that makes consistency between single-step and multi-step sampling possible. The following Section \ref{sec:introMVLM}  introduces multivariate exponential distributions that satisfy such lack of memory property. This is done directly at multivariate distribution level without splitting marginal and dependence information, i.e. without resorting to copula functions. We resort to copula functions in the following,  so that Section \ref{subsec:copfun} presents a quick introduction to copula functions, whereas  Section \ref{subsec:scc} introduces the self-chaining property that translates the lack of memory and consistency above into copula language.  Section \ref{sec:copulaarrival} goes into more detail and derives the characterization of the common period lack of memory property in terms of copula properties, proving that this is achieved by the self-chaining property. A first characterisation of self-chaining copulas in terms of homogeneity or PDEs is presented.  
Section \ref{sec:archchar} fully characterises self chaining copulas in the archimedean family as the Gumbel-Hougaard copula.
Section \ref{sec:sccevc} explains that self-chaining and extreme-value copulas are the same, and by summarizing the Pickands function characterization of Extreme Value Copulas provides a precise characterization also of self-chaining copulas. Finally, Section \ref{sec:conclu} concludes the paper.

\section{A multivariate form of lack of memory: CPLM}\label{subsec:CPLM}
Consider event times (typically inter-arrival times, inter-failure times or for example defaults) for $n$ entities, and let the event time for entity $i$ be denoted by $\tau_i$. These event times can for example be default times of a number of possibly related firms or first-failure times for a number of entities.

Denote by $F_i$ the cumulative distribution function for $\tau_i$, and by $G_i = 1-F_i$ the related survival function. 
Suppose now that we look at a single period vs multi-period context. Consider a final period $N T$ obtained by adding a partition of single period intervals of length $T$, hence periods $[0,T), [T,2T), \ldots, [(N-1)T,NT]$.

Assume that the single event times do not have memory, in that
\begin{equation}\label{eq:nomemory} \Px(\tau_i > T | \tau_i > S) = \Px(\tau_i > T-S)
\end{equation}
for any $i$ and any $0\le S \le T$ (and hence are exponentially distributed and $G_i(N T) = G_i(T)^N$).
An important practical question is the following. Is it possible to iterate a simulation of the joint survival of arrival times always in the same way in all subintervals 
$[0,T), [T,2T), \ldots, [(N-1)T,NT]$, and also in the same way as we simulate joint survival of arrival times in a single sampling run in $[0, N T]$? The answer is affirmative under a multivariate form of 
{\em lack of memory}, namely  (for the case $n=2$ for simplicity)
\begin{equation}\label{eq:lombiv}  \Px(\tau_1 \ge j T,\tau_2 \ge j T|\tau_1 \ge h T,\tau_2 \ge h T) = \Px(\tau_1 \ge (j-h)T,\tau_2 \ge (j-h)T) 
\end{equation}
with $j>h$ integers.
In this paper we will try and characterise this lack of memory property, which we term "common periods lack of memory" (CPLM), in accordance with the following remark. 

\begin{remark} {\bf (Lack of memory at common levels for both random times)} 
It should be noted, importantly, that we do not adopt the most general definition of bivariate lack of memory, namely
\begin{equation}\label{eq:lombivgen}  \Px(\tau_1 \ge j T,\tau_2 \ge k T|\tau_1 \ge h T,\tau_2 \ge i T) = \Px(\tau_1 \ge (j-h)T,\tau_2 \ge (k-i)T) 
\end{equation}
for $\max(i,h) \le \min(j,k)$.  
This is because we plan to apply our result to joint simulations of the multivariate vector of arrival times in common intervals. Besides, Condition in Eq. (\ref{eq:lombivgen}) would be too strong, implying a trivial case of lack of memory, namely independence of the two exponential random variables $\tau_1$ and $\tau_2$, see for example \cite{MarshallOlkin}. 
\end{remark}

\section{Multivariate exponential distributions and CPLM}\label{sec:introMVLM}
We now present two possible multivariate exponential distributions with exponential marginals and satisfying the Common Period Lack of Memory (CPLM) property. 

Consider the bivariate case for simplicity, involving a bivariate vector of univariate exponential arrival times  $\tau_1,\tau_2$.  Leaving aside the general metod of introducing a copula to connect marginally exponential random variables for the time being, it is well known that there are several bivariate exponential distributions that could be used to model the bivariate random vectors consistently with the univariate exponential distributions. One of the most utilized bivariate exponential distributions is the Marshall-Olkin bivariate distribution (see \cite{MarshallOlkin}). This distribution generalizes the lack of memory property of univariate exponential distributions, satisfying in particular our CPLM definition in Eq (\ref{eq:lombiv}) above.
The Marshall Olkin bivariate distribution, however, features a singular component and admits a strictly positive probability that $\tau_1 = \tau_2$. When one excludes perfectly simultaneous $\tau_1$ and $\tau_2$, one may resort  for example to a bivariate exponential distribution among the three proposed by Gumbel in \cite{Gumbel}. The first bivariate distributions in \cite{Gumbel} satisfies an alternative definition of lack of memory, also known as bivariate remaining life constancy, see \cite{Kotz}.
However, this first distribution proposed by Gumbel can only describe negative dependence and in a limited range. The second bivariate exponential in \cite{Gumbel} only describes a range of dependence $[-1/4, 1/4]$ for correlation, and as such is of limited scope. This is why we resort to the third bivariate exponential distribution only briefly introduced in \cite{Gumbel}. See also \cite{LuBhatta} and \cite{Kotz}.

The joint survival function for this bivariate exponential is, for simple univariate exponential marginals with positive intensities $\lambda$'s, and for dependence parameter $\theta  \in [1, \infty)$,
\begin{equation}\label{eq:Gumbexp} \mathbb{Q}(\tau_1 > x_1,\tau_2 > x_2) := G(x_1,x_2) = \exp( - ( (\lambda_1 x_1 )^\theta + (\lambda_2 x_2 )^\theta )^{1/\theta} ). 
\end{equation}

Notice that, indeed, the marginal distributions are exponential random variables with mean respectively $1/\lambda_1$ and $1/\lambda_2$. We will set $\lambda_1$ to a constant deterministic intensity, and $\lambda_2$ to a constant intensity as well. It is straightforward to check that this third Gumbel multivariate exponential distribution satisfies the lack of memory property in Eq (\ref{eq:lombiv}) (i.e. CPLM, but not the more general lack of memory property in Eq \ref{eq:lombivgen}!).

We notice that Kendall's tau for this distribution, which is a good measure of dependence (invariant for invertible increasing transformations), is
\begin{equation}
 \tau^K(G) = 1 - 1/\theta.
\end{equation}
This confirms that $\theta=1$ characterizes the independence case, whereas $\theta \rightarrow \infty$ characterizes the co-monotonic case. This bivariate exponential distribution can therefore describe the whole range of positive dependence.

We can also notice that $\lambda$'s are pure marginal parameters, whereas $\theta$ is a pure dependence parameter. This is a bivariate distribution allowing for tail dependence, and, differently from Marshall Olkin's, does not have a singular component, so that there is zero probability that $\tau_1 = \tau_2$.

It should be noted that in a number of applications, and especially in financial modeling of default times, the assumption that $\lambda$'s are deterministic is not realistic and its negative features have been highlighted for example in \cite{Brigo08}. 

The Marshall-Olkin and Gumbel multivariate exponential distributions satisfy the CPLM property and thus lend themselves to a consistent single-step and multi-step simulation and sampling of survival. However, to better characterize dependency that is consistent with multivariate lack of memory we standardize away marginal distributions and focus on copula functions as follows.   

\section{Copula Functions}\label{subsec:copfun}
Consider a random vector $X=(X_1,...,X_n)$, and suppose that we
wish to analyze the dependence between its components. The law of
$X$ is then characterized by its joint cumulative distribution
function (CDF) $(x_1,\ldots,x_n) \mapsto \Px(X_1\le x_1,...,X_n\le
x_n)$. However, this function mixes information on the dependence
between the different components of the vector with information on
the distribution of the single components themselves. Copula
functions have been introduced in order to allow for a separation
between the marginal CDF and the dependence structure. The former
concerns single components, taken one at the time, and is given by
the CDF's $F_i(x):=\Px(X_i\le x)$, $i=1,\ldots,n$, which we assume
to be continuous. The latter is entirely represented by the copula
function we introduce now. It is well known that
$U_1=F_1(X_1),...,U_n=F_n(X_n)$ are uniformly distributed random
variables on $[0,1]$. The joint cumulative distribution function
of $(U_1,...,U_n)$, that we denote by $C(u_1,...,u_n)= \Px(U_1 \le
u_1,...,U_n \le u_n)$,  is called the copula function of
$(X_1,...,X_n)$ and satisfies:
\begin{eqnarray} \label{cop1}
   \Px(X_1\le x_1,...,X_n\le x_n)=C(\Px(X_1\le x_1),...,\Px(X_n\le
   x_n)).
\end{eqnarray}
A copula function has the following three properties:
\begin{eqnarray}
\label{cop:1} C(u_1,..,u_{i-1},0,u_{i+1},..,u_n)=0 \label{eq:coppro1} \\
\label{cop:2} C(1,..,1,u_k,1,..,1)=u_k \label{eq:coppro2}  \\
\label{cop:3} \partial_{u_1...u_n}C\  \mbox{is a
positive measure in the sense of Schwartz distributions.} \label{eq:coppro3} 
\end{eqnarray}
Condition (\ref{cop:3})
means concretely that for any hypercube\\ 
$H=[a_1,b_1]\times...\times[a_n,b_n] \subset [0,1]^n$ we have
$\Bbb{P}[(U_1,..,U_n) \in H]\geq 0.$

Conversely, one can show that any function that satisfies the above
three conditions (\ref{eq:coppro1}, \ref{eq:coppro2}, \ref{eq:coppro3})  can be viewed as the joint CDF of a vector of
uniform variables on $[0,1]$ and is thus a copula, see for example \cite{Joe}, \cite{Nel}, \cite{ressel} and \cite{sklar}.  
In this spirit, {\it in the following, the expression "simulating
a copula C" will denote the simulation of a random vector of
uniform variables $(U_1,..,U_n)$ on $[0,1]$ whose joint CDF is
$C$}. This can be occasionally referred to as ''sampling the copula".

Among the different ways to define specific copula functions,
there are the following two. The first one consists in seeking
functions $C$ satisfying the three above properties. Archimedean
copulas are an example of this approach. Indeed, Archimedean
copulas stem from the remark that if $\varphi$ is a convex
strictly decreasing function such that $\varphi(1)=0$ and with suitable additional properties (see Theorem \ref{th:kimberling} below), then
$C(u_1,..,u_n)= \varphi^{[-1]} (\varphi(u_1)+..+\varphi(u_n))$
has the above three properties and is thus a copula. The function $\varphi$
is called the generator of the copula.  The Archimedean copula is said to be strict if the generator satisfies
$\lim_{u \rightarrow 0^+ } \varphi(u) = \infty$, and for these copulas the pseudo-inverse $\varphi^{[-1]}$ can be replaced by the inverse $\varphi^{-1}$ and the generator is said itself to be strict.
Examples of Archimedean copulas are the Gumbel-Hougaard, Joe, Clayton, Frank copulas, see Nelsen (1999) and Joe (1997).

The second method consists in
working directly with joint CDF's $F(x_1,...,x_n)$ and the related
marginal CDF's $F_i$, providing the copula
$F(F_1^{-1}(u_1),...,F_n^{-1}(u_n))$. Even if this method does not
always lead to analytically tractable copulas, it can provide us
with copulas that are easy to simulate. Indeed, the main example
of this kind of construction is the fundamental family of Gaussian
copulas. Student-t copulas are also defined this way. 

A Gaussian copula is defined as the copula of a joint
Gaussian random vector $X$ with standard Gaussian marginals and
correlation matrix $\rho$, and is thus given by
$N_{\rho}(N^{-1}(u_1),...,N^{-1}(u_n))$ where $N$ is the CDF of a
scalar standard Gaussian variable and $N_{\rho}$ is the joint CDF
of $X$.
%
%

In recent years, copula functions have received a great deal of
attention, see for example the papers of Genz and Bretz (2002),
H\"{u}rlimann (2003), Juri and W\"{u}thrich (2002), Wei and 
Hu~(2002), Alfonsi and Brigo (2005), and the books of Joe~(1997) and Nelsen~(1999). For
financial and insurance applications, recent applications on
copulas include for example Li (2000), Bouy\'e et al. (2000), Cherubini et
al. (2002), Jouanin et al. (2001),
Klugman and Parsa (1999).
For an excellent extensive overview see Embrechts et al. (2003), and for applications to finance see \cite{cherubook}.

\section{Multivariate Sampling: Single vs Multi step copulas}\label{subsec:scc}

Consider the joint survival or no-event (no-failure) probability function at $t_1,\ldots,t_n$,
\[ \Px(\tau_1 \ge t_1,\ldots,\tau_n \ge t_n) = \Px(G_1(\tau_1) \le G_1(t_1),\ldots,G_n(\tau_n) \le G_n(t_n)) \]\[=
\Px(U_1 \le G_1(t_1),\ldots, U_n \le G_n(t_n)) =: C(u_1,\ldots,u_n)\]
where we termed $C$ the survival times copula and we set $U_i = G_i(\tau_i)$, uniform random variable, and $u_i = G_i(t_i)$.

Suppose now that we look again at a single period vs multi-period context. We still enforce univariate lack of memory as in 
Eq (\ref{eq:nomemory}).   
In this paper we will try and characterise the multivariate lack of memory property CPLM of Eq (\ref{eq:lombiv}) in terms of copula properties. 
We will see that if CPLM holds, we can sample the event where all arrival times exceed time $N T$ either at the final time $N T$ directly, which is consistent with a survival times copula
\[ C(u_1,\ldots,u_n), \ \ u_1 = G_1(N T), \ldots,  u_n = G_n(N T) , \]
or by iterating $N$ steps of survival in each $[(i-1)T, i T), \ i=1,\ldots,n$. We will see that under CPLM the second procedure produces the copula
\[ (C(u_1^{1/N},\ldots,u_n^{1/N}))^N, \ \ u_1 = G_1(N T), \ldots,  u_n = G_n(N T) . \]
Clearly, to have consistency the two copulas have to coincide, namely
\[ (C(u_1^{1/N},\ldots,u_n^{1/N}))^N = C(u_1,\ldots,u_n) \]
When this happens for a given copula function, we call the copula a "self-chaining copula", and this is consistent with common period multivariate lack of memory.

Now the problem is that the industry wrongly applies the above procedure or related procedures even when the copula is not self-chaining and when CPLM does not hold. Typically, the above procedure is applied with a Gaussian copula $C$. This is not consistent and leads to a difference between the one-shot sampling at $N T$ and the iterated sampling, to the point that the latter kills dependence for large $N$.

This is such an important point that it is worth clarifying it with an example.

\begin{example}\label{ex:iter}{\bf (Iterating a Gaussian copula kills dependence)}
Consider two exponentially distributed default times connected by a Gaussian copula with dependence parameter $\rho$ and with constant intensities $\lambda_1$ and $\lambda_2$, namely 
\[ \tau_1 = -\ln(1-\Phi(X_1))/\lambda_1, \ \ \tau_2 = -\ln(1-\Phi(X_2))/\lambda_2,\] 
where $[X_1,X_2]$ is a bivariate Gaussian random vector with standard gaussian marginals and with correlation parameter $\rho$. 

Assume $\lambda_1 = \lambda_2 = 0.02$, \ \ $\rho = 0.9$, and consider the following two procedures:

\begin{itemize}
\item Sample directly the event $\tau_1 > 100y \cap \tau_2 > 100y$.
The probability of this event, based on a simulation with one million scenarios, is

\[ 0.097 \pm 0.0003 \ \ (0.0969) \]
where we included the simulation standard error. The value within brackets is the value computed by resorting to a double numerical integration routine to compute the Gaussian copula.

\item Iterate the check $\Delta_i \tau_1 > 1y \cap \Delta_i \tau_2 > 1y$ 100 times, where 
$[\Delta_i \tau_1, \Delta_i \tau_2]$, $i=1,2,\ldots,100$ are independent copies of  $[\tau_1,\tau_2]$ to be used to check default in every year. We count the scenarios along which there is always joint survival up to 100 years and divide by the total number of scenarios. 

This yields a survival probability of 

\[ 0.057 \pm 0.0003  \ \ \  (0.0557)\]
The value within brackets is the value computed by resorting to a double numerical integration routine to compute the Gaussian copula for 1y, thus determining the 1y survival probability, and then using the 100th power of this number for the survival over 100 years.

\end{itemize}

As we can see, there is a quite relevant difference, in this case, between simulating joint survival one-shot for 100 years, and iterating 100 times a 1y survival event as if the multivariate arrival times satisfied lack of memory. In the latter case the joint survival probability (and hence dependence) is much smaller. This is a confirmation that iteration, when lack of memory is not satisfied, kills dependency. This happens, in particular, with the Gaussian copula.
\end{example}

%

In this paper we will characterize copula functions leading to CPLM and hence to the possibility to have consistent sampling and subsampling, or single-step and multi-step simulations which do not destroy dependence through iteration. This is of great practical relevance.

\section{Copula characterization of multivariate Lack of Memory}\label{sec:copulaarrival}
The main result of this section is the following
\begin{proposition}
Assume single event times $\tau_1$ and $\tau_2$ satisfy each the lack of memory property (\ref{eq:nomemory}).
{ Lack of Memory} extends to the bivariate default time via the CPLM condition in Eq (\ref{eq:lombiv})
if and only if the survival or no-event time copula satisfies
\begin{equation}\label{eq:copulacond} C(u_1^k,u_2^k) = C(u_1,u_2)^k  \ \  \ \ \mbox{ (self-chaining copula)}
\end{equation}
for all $u_1, u_2$ in $[0,1]$ and integer positive $k$.
\end{proposition}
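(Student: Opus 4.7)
The plan is to translate both sides of the CPLM condition (\ref{eq:lombiv}) into statements about the survival copula $C$ using the representation
\[ \Px(\tau_1 > t_1, \tau_2 > t_2) = C(G_1(t_1), G_2(t_2)) \]
together with the univariate lack of memory (\ref{eq:nomemory}), which for exponential marginals gives $G_i(kT) = G_i(T)^k$. First I would rewrite CPLM as
\[ \frac{C(G_1(T)^j, G_2(T)^j)}{C(G_1(T)^h, G_2(T)^h)} = C(G_1(T)^{j-h}, G_2(T)^{j-h}). \]
Setting $v_i := G_i(T)$, this is the multiplicative identity
\[ C(v_1^j, v_2^j) = C(v_1^h, v_2^h)\,C(v_1^{j-h}, v_2^{j-h}). \]
Specialising to $h=1$ and inducting on $j$ then yields $C(v_1^k, v_2^k) = C(v_1, v_2)^k$ for every positive integer $k$.

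For the \emph{if} direction, I would simply reverse this chain: assuming self-chaining, compute
\[ C(G_1(T)^j, G_2(T)^j) = C(G_1(T), G_2(T))^j \]
for any integer $j$, and then form the ratio for $j$ and $h$ to recover (\ref{eq:lombiv}) directly. This direction is a one-line substitution.

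The main obstacle is in the \emph{only if} direction, because for a single fixed pair of intensities $(\lambda_1,\lambda_2)$ the pair $(v_1,v_2) = (e^{-\lambda_1 T}, e^{-\lambda_2 T})$ traces only a one-parameter curve in $(0,1)^2$ as $T$ ranges over $(0,\infty)$, namely $v_2 = v_1^{\lambda_2/\lambda_1}$. To obtain the copula identity for \emph{every} pair $(u_1,u_2)\in(0,1)^2$, I would invoke that the marginal intensities $\lambda_1,\lambda_2>0$ are otherwise arbitrary: given any $(u_1,u_2)\in(0,1)^2$, take $T=1$ and $\lambda_i = -\ln u_i$, so $v_i = u_i$, which realises the arbitrary pair within a valid exponential-marginal model to which the CPLM hypothesis applies. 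Equivalently, one can keep the intensities fixed and set $\alpha = \lambda_2/\lambda_1$, noting that $\{(v_1, v_1^\alpha) : v_1 \in (0,1)\}$ together with all permissible $\alpha>0$ exhausts $(0,1)^2$.

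Finally, the boundary cases $u_i \in \{0,1\}$ follow from the standard copula properties (\ref{cop:1})--(\ref{cop:2}) (which give $C(0,u_2)=0$ and $C(1,u_2)=u_2$, both trivially compatible with $C(u_1^k,u_2^k) = C(u_1,u_2)^k$) and from continuity of $C$ on $[0,1]^2$, so the identity extends from $(0,1)^2$ to all of $[0,1]^2$. The argument is entirely bivariate as stated, and the same manipulation extends componentwise to arbitrary $n$ should one wish to generalise.
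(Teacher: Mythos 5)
Your proposal is correct and follows essentially the same route as the paper: decompose the joint survival probability at $jT$ using the CPLM conditioning, invoke $G_i(kT)=G_i(T)^k$ from univariate lack of memory, read off the copula identity at the points $(G_1(T),G_2(T))$, and iterate (induct) to reach all positive integer exponents; the converse is the same one-line substitution in both. The one place you go beyond the paper is your explicit treatment of the fact that, for fixed intensities, the points $(G_1(T),G_2(T))$ only sweep out the curve $u_2=u_1^{\lambda_2/\lambda_1}$, so that getting the identity on all of $(0,1)^2$ requires varying the intensity ratio (equivalently, reading the proposition as a statement about the copula across all exponential marginal specifications); the paper's proof passes over this silently and simply treats $(G_1(T),G_2(T))$ as an arbitrary point of the square. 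Your patch, together with the boundary remarks via (\ref{cop:1})--(\ref{cop:2}), is the natural way to make the ``only if'' direction rigorous in the sense the proposition is evidently intended.
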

\begin{proof}
Assume lack of memory holds in the bivariate case too. Compute
\begin{eqnarray*}  \Px(\tau_1 \ge 2 T,\tau_2 \ge 2 T) = \Px(\tau_1 \ge 2 T,\tau_2 \ge 2T|\tau_1 \ge T,\tau_2 \ge T)
\Px(\tau_1 \ge T,\tau_2 \ge T) =  \\
=\Px(\tau_1 \ge 2T-T,\tau_2 \ge 2T-T)\Px(\tau_1 \ge T,\tau_2 \ge T) = \Px(\tau_1 \ge T,\tau_2 \ge T)^2 .
\end{eqnarray*}
Now lack of memory for single default times implies easily $G_i(2T) = G_i(T)^2$, so that by taking $G$'s on both sides of the events whose probabilities are computed, from the previous equation we have
\begin{eqnarray*}  \Px(U_1 \le G_1(T)^2, U_2 \le G_2(T)^2) =  \Px(U_1 \le G_1(T), U_2 \le G_2(T))^2
\end{eqnarray*}
or actually (\ref{eq:copulacond}) with $k=2$. Iterating shows (\ref{eq:copulacond}) holds for all $k$.

Viceversa, if (\ref{eq:copulacond}) holds for all $k$, it is immediate to prove that lack of memory holds.
\end{proof}

\begin{remark} {\bf (Extension to real exponents)} \\ The change of variables $\bar{u}_{1,2} := u_{1,2}^k$ with a substitution in (\ref{eq:copulacond}) shows that the analogous of (\ref{eq:copulacond}) in $\bar{u}_{1,2}$ will hold with exponent $1/k$. Combining the two results for different $k$ yields easily that (\ref{eq:copulacond}) will hold for any rational positive exponent replacing the positive integer $k$, and by extending the relationship by continuity to the whole set of positive real exponents we have (\ref{eq:copulacond}) holding for any positive real $k$.
\end{remark}

The proof also extends straightforwardly to the copula on $n$ exponentially distributed arrival times.

We refer to Condition (\ref{eq:copulacond}), that in the $n$-dimensional case reads
\begin{equation}\label{eq:copulacondndim} C(u_1^k,u_2^k,\ldots,u_n^k) = C(u_1,u_2,\ldots,u_n)^k , k \in {\Bbb R}^+  \ \ \mbox{ (self-chaining copula)},
\end{equation}
as to a ``self-chaining" property of the copula. This property tells us that if we break the simulation of first arrival times up to a final time $ j T$ into simulation on intervals $[1,T]$, $[T, 2T]$,..., $[(j-1)T,jT]$ then both the overall copula in the one-shot case and the step by step copula in the multi-step case are the same copula and lead to the same final result.


A simple characterization of the self-chaining condition guaranteeing common periods lack of memory for the multivariate exponential arrival-times is the following.

\begin{proposition} Let $L(v_1,v_2):= \log C(e^{v_1},e^{v_2})$, i.e. the log-copula with exponentially expressed arguments, defined for $v_1,v_2$ such that $C(e^{v_1},e^{v_2})>0$. We have
\begin{equation}\label{eq:copulacondhomog} C(u_1^k,u_2^k) = C(u_1,u_2)^k \iff L(k v_1,k v_2) = k L(v_1, v_2)
\end{equation}
for positive real $k$ and all $u_1,u_2$ where $C$ is strictly positive.
\end{proposition}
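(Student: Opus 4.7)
The claim is essentially a change-of-variables identity, so my plan is to unwind the two conditions on a common domain and observe they are literal transcriptions of each other.

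First I would fix the domain. Since $u_i \in (0,1]$ corresponds bijectively to $v_i = \log u_i \in (-\infty,0]$ via $u_i = e^{v_i}$, and the condition $C(u_1,u_2) > 0$ translates to the domain where $L$ is defined (so the logarithms are finite), I can work on this common region without worry.

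Next, for the forward direction, I would start from the self-chaining identity $C(u_1^k,u_2^k) = C(u_1,u_2)^k$, substitute $u_i = e^{v_i}$ (noting $u_i^k = e^{kv_i}$), and take logarithms of both sides. The left-hand side becomes $\log C(e^{kv_1},e^{kv_2}) = L(kv_1,kv_2)$, and the right-hand side becomes $k \log C(e^{v_1},e^{v_2}) = k L(v_1,v_2)$, giving the homogeneity of $L$.

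For the converse, I would start from $L(kv_1,kv_2) = kL(v_1,v_2)$, exponentiate both sides, and substitute $v_i = \log u_i$; this immediately recovers $C(u_1^k,u_2^k) = C(u_1,u_2)^k$. Finally I would remark that each step is fully reversible, so the two statements are equivalent on the stated domain and for all positive real $k$ (using the rational-exponent extension already noted in the preceding remark).

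There is no real obstacle here; this is a clean tautology once the substitution is spelled out. The only mild subtlety worth flagging is the restriction of the domain to the region where $C>0$, so that $L$ is well-defined and the log/exp manipulations are lossless.
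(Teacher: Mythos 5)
Your proof is correct and matches the paper, which simply declares the proof ``immediate'': the content is exactly the change of variables $u_i = e^{v_i}$ together with taking logarithms (and exponentials for the converse), restricted to the region where $C>0$ so that $L$ is defined. The aside about extending from rational to real exponents is unnecessary here, since the equivalence holds pointwise for each fixed positive real $k$.
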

This amounts to homogeneity of order $1$ in the positive real $k$ for the log copula with exponential arguments.
The proof is immediate.

It is easy to notice that, for known families of copulas, Property~(\ref{eq:copulacondhomog}) is satisfied by the Marshall-Olkin copula and the Gumbel-Hougaard copula.

\begin{proposition}
The Gumbel-Hougaard copula and the Marshall-Olkin copula are self-chaining.
\end{proposition}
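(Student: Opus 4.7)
The plan is to verify the self-chaining identity $C(u_1^k,u_2^k)=C(u_1,u_2)^k$ by direct substitution for each of the two copula families, exploiting in each case a simple algebraic identity that lets a factor $k$ be pulled outside.

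For the Gumbel--Hougaard copula $C(u_1,u_2)=\exp\!\bigl(-\bigl((-\ln u_1)^\theta+(-\ln u_2)^\theta\bigr)^{1/\theta}\bigr)$ with $\theta\ge 1$, I would substitute $u_i^k$ and use $-\ln(u_i^k)=k(-\ln u_i)$; this extracts a factor $k^\theta$ from each summand inside the bracket, and taking the $1/\theta$-th root pulls out a factor $k$, so that the exponent becomes $-k\bigl((-\ln u_1)^\theta+(-\ln u_2)^\theta\bigr)^{1/\theta}$, i.e. $\log C(u_1,u_2)^k$. Alternatively, and perhaps more elegantly, I would invoke the log-copula characterisation of the preceding proposition: writing $L(v_1,v_2)=\log C(e^{v_1},e^{v_2})=-\bigl((-v_1)^\theta+(-v_2)^\theta\bigr)^{1/\theta}$ makes homogeneity of degree $1$ in $(v_1,v_2)$ manifest, and the self-chaining property follows immediately.

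For the Marshall--Olkin copula I would fix the two-parameter form $C(u_1,u_2)=\min\bigl(u_2\,u_1^{1-\alpha_1},\,u_1\,u_2^{1-\alpha_2}\bigr)$ with $\alpha_1,\alpha_2\in[0,1]$. Replacing $u_i$ by $u_i^k$ turns each entry of the minimum into the $k$-th power of the corresponding original entry, so the verification reduces to the elementary identity $\min(a^k,b^k)=\bigl(\min(a,b)\bigr)^k$, valid for all $a,b\in[0,1]$ and all $k>0$ by the monotonicity of $x\mapsto x^k$ on $[0,\infty)$. This yields $C(u_1^k,u_2^k)=C(u_1,u_2)^k$ at once. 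One could equivalently verify homogeneity of the log-copula $L$, but the direct form is cleaner here because it sidesteps the non-smooth $\min$.

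There is essentially no obstacle: both verifications are a few lines of algebra. The only point that deserves a brief comment is the choice of parameterisation in the Marshall--Olkin case, since several equivalent forms circulate in the literature; I would state explicitly which one I use and note that the self-chaining property is independent of the parameterisation because it is a pure functional equation in $C$. Finally, although the statement is written for $n=2$, both arguments extend verbatim to $n$ variables (by replacing the bivariate sum or $\min$ with the corresponding $n$-ary operation), which justifies the more general Equation~(\ref{eq:copulacondndim}).
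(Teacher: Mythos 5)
Your verification is correct, and it follows essentially the same route as the paper, which simply asserts that Property~(\ref{eq:copulacondhomog}) (homogeneity of the log-copula with exponential arguments) is easily seen to hold for both families and leaves the algebra to the reader. Your explicit substitutions for the Gumbel--Hougaard exponent and the $\min(a^k,b^k)=\bigl(\min(a,b)\bigr)^k$ identity for Marshall--Olkin supply exactly the omitted details, so there is nothing to add.
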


\begin{problem}{(\bf Characterization of Self Chaininig Copulas)} 
Find a characterization of self-chaining copulas, including Marshall-Olkin and Gumbel-Hougaard as a special case. In a way, this amounts to characterize homogeneous functions $L$ (i.e. satisfying Eq \ref{eq:copulacondhomog}) whose exponential $\exp(L)$ satisfies Eqs. (\ref{cop:1}, \ref{cop:2}, \ref{cop:3}).

The homogeneity condition may also be expressed as a PDE. Starting from
\[ L(tx, ty) =t L(x,y) , \]
and taking the total derivative wrt $t$ on both sides one obtains
\[ L_x(tx, ty) x + L_y(tx,ty) y = L(x,y)\]
where $L_x(\bar{x},\bar{y}) = \partial_x L(\bar{x},\bar{y})$ and similarly for $L_y$.
Set $t=1$ to get
\begin{equation}\label{L:PDE} L_x(x, y) x + L_y(x,y) y = L(x,y) . \end{equation}
Remembering that $L(v_1,v_2):= \log C(e^{v_1},e^{v_2})$ one has
\[  C_u (u,v)\ u\  \log(u)+ C_v(u,v)\ v\ \log(v) = C(u,v)\ \log C(u,v) . \]
Now the problem is: Characterizing all copula functions, i.e. all positive functions $C$ with values in $[0,1]$ and satisfying Eqs (\ref{cop:1}, \ref{cop:2}, \ref{cop:3}), that satisfy this PDE. 

Alternatively, one can work with the $L$ PDE (\ref{L:PDE}), with $L$ taking values in $(-\infty,0]$, and such that $\exp(L)$ satisfies
Eqs (\ref{cop:1}, \ref{cop:2}, \ref{cop:3}).


\end{problem}

In the strict Archimedean copulas context, with copulas associated with frailty distributions, we can solve the above problem and provide a full characterization of the self chaining property.


Before solving the above problem in the archimedean context, it is worth noticing that the self-chaining Marshall-Olkin and Gumbel-Hougaard copulas are associated resprectively with the multivariate Marshall-Olkin and Gumbel (Eq. (\ref{eq:Gumbexp})) multivariate exponential distributions mentioned in Section \ref{sec:introMVLM} when one standardizes away the marginals. Such distributions  satisfy the CPLM property as we had already observed in the abovementioned Section.  

However, despite such consistency, the Marshall-Olkin and Gumbel-Hougaard copulas are far from being the most commonly used copula function. The most commonly used copula functions are the Gaussian copula and the Student-t copula. 

Gaussian Copula functions have been associated by part of the press to the financial crisis started in 2007. For a discussion on a number of misunderstandings associated with this opinion and on the real shortcomings of the way Gaussian copulas have been used for Collateralized Debt Obligations (CDOs) see \cite{brigopallatorre}. 

\begin{remark}{(\bf Iterating a Gaussian copula for multivariate arrival times sampling is inconsistent).}
It is worth saying immediately that the Gaussian copula is NOT self-chaining and therefore is not consistent with multivariate lack of memory. Iterating a Gaussian copula for joint arrival times simulation is thus wrong, and kills dependence in the long run, as we have seen in Example \ref{ex:iter} above.
\end{remark}
In the following we turn to characterization of self-chaining copulas in the Archimedean family.

\section{Characterizing Archimedean self-chaining copulas}\label{sec:archchar}
To accomplish the above characterization within Archimedean copulas, we need to narrow a little the focus to Archimedean copulas whose generator is associated to a Frailty distribution. The key result in this respect is Kimberling's (1974) characterization of Archimedean copulas with a generator working for every dimension:

\begin{theorem}\label{th:kimberling} {\bf (Kimberling, 1974)}
Given a strict generator $\varphi$,\\ the function $(u_1,\ldots,u_n) \mapsto \varphi^{-1} (\varphi(u_1)+..+\varphi(u_n))$ from $[0,1]^n$ to $[0,1]$ is a copula for every $n$ if and only if $\varphi^{-1}$ is completely monotone on $[0,\infty)$.
\end{theorem}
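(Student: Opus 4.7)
The plan is to establish the equivalence in two directions, using Bernstein's theorem on completely monotone functions as the probabilistic bridge: on one side, complete monotonicity of $\varphi^{-1}$ is equivalent to its being the Laplace transform of a probability measure, and on the other side, copula volume positivity at every order $n$ is a differential/difference statement about $\varphi^{-1}$.

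For the sufficiency direction I would argue by explicit construction. Because $\varphi$ is a strict generator, $\varphi^{-1}$ is continuous on $[0,\infty)$ with $\varphi^{-1}(0)=1$ and $\varphi^{-1}(\infty)=0$. Complete monotonicity together with $\varphi^{-1}(0)=1$ lets me invoke Bernstein's theorem to obtain a probability distribution $F_\Theta$ on $(0,\infty)$ with $\varphi^{-1}(s)=\int_0^{\infty} e^{-s\theta}\,dF_\Theta(\theta)$. I then set up the standard frailty/latent-variable construction: let $\Theta\sim F_\Theta$ and, independently, let $V_1,\dots,V_n$ be i.i.d.\ uniform on $[0,1]$, and define $U_i := \varphi^{-1}(-\ln V_i/\Theta)$. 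Using that $\varphi^{-1}$ is strictly decreasing, the event $\{U_i\le u_i\}$ is equivalent to $\{V_i\le e^{-\Theta\varphi(u_i)}\}$, so conditional independence given $\Theta$ yields
\begin{equation*}
\Px(U_1\le u_1,\dots,U_n\le u_n\mid\Theta)=\exp\!\Bigl(-\Theta\sum_{i=1}^n\varphi(u_i)\Bigr).
\end{equation*}
Taking expectation with respect to $\Theta$ and recognising the Laplace transform recovers exactly $\varphi^{-1}\bigl(\sum_i\varphi(u_i)\bigr)$. Specialising to $u_j=1$ for $j\ne i$ shows each $U_i$ is uniform, so the formula is the joint CDF of a uniform random vector, i.e.\ a copula, for every $n$.

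For the necessity direction, I would assume enough smoothness on $\varphi$ to differentiate and compute the $n$-fold mixed partial
\begin{equation*}
\partial_{u_1}\!\cdots\partial_{u_n} C(u_1,\dots,u_n) = (\varphi^{-1})^{(n)}\!\bigl(\varphi(u_1)+\dots+\varphi(u_n)\bigr)\cdot\prod_{i=1}^n \varphi'(u_i).
\end{equation*}
The copula volume condition (Eq.~\ref{cop:3}) forces this density to be nonnegative throughout $(0,1)^n$. Since $\varphi$ is strictly decreasing, $\varphi'<0$, so $\prod_i\varphi'(u_i)$ has sign $(-1)^n$, which forces $(-1)^n(\varphi^{-1})^{(n)}(s)\ge 0$ at every $s$ of the form $\sum_i\varphi(u_i)$. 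Because $\varphi$ is a strict generator, $\varphi$ maps $(0,1]$ onto $[0,\infty)$, so as the $u_i$ vary freely, the argument $s$ sweeps out all of $[0,\infty)$. Since this must hold for every $n$, we conclude that $\varphi^{-1}$ is completely monotone on $[0,\infty)$.

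The main obstacle I expect is the smoothness/regularity issue in the necessity direction: Eq.~\ref{cop:3} is phrased in the distributional sense on hypercubes, not as a pointwise derivative inequality, and a priori $\varphi$ need not be $C^n$. To make the argument fully rigorous I would replace pointwise derivatives by the finite-difference $n$-volumes $V_C(H)$ on small hypercubes $H\subset [0,1]^n$; nonnegativity of these volumes translates, via the chain of substitutions $s_i=\varphi(u_i)$, into nonnegativity of the $n$-th order iterated forward differences of $\varphi^{-1}$. The Hausdorff--Bernstein--Widder theorem then characterises functions whose iterated differences of every order have alternating signs as precisely the completely monotone functions, closing the argument. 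A secondary subtlety to handle cleanly is the boundary behaviour at $s=0$ (where $\varphi^{-1}(0)=1$ must be finite) and at $s=\infty$, both of which are guaranteed by strictness of $\varphi$.
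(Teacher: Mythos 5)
The paper does not actually prove this statement: it is imported verbatim as Kimberling's 1974 theorem, with the Bernstein--Widder connection to frailty distributions mentioned only in the surrounding discussion. So there is no in-paper proof to compare against; I can only assess your argument on its own terms, and it is essentially correct and is the standard proof of this result. Your sufficiency direction is the classical frailty (Marshall--Olkin) construction: Bernstein's theorem turns complete monotonicity plus $\varphi^{-1}(0)=1$ into a mixing probability law for $\Theta$, the conditional-independence computation is right, and strictness of $\varphi$ is exactly what rules out an atom of $\Theta$ at zero (so that $-\ln V_i/\Theta$ is well defined and $\varphi^{-1}(s)\to 0$ as $s\to\infty$). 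Because the candidate function is realised as the joint law of an explicit uniform random vector, all three copula axioms follow at once, which is cleaner than verifying the rectangle inequality by hand. Your necessity direction correctly identifies the one genuine delicacy, namely that the positivity of the $n$-fold mixed partial is only formal without smoothness; replacing it by nonnegativity of $C$-volumes of hypercubes, transporting these through $s_i=\varphi(u_i)$ into alternating-sign iterated differences of $\varphi^{-1}$ of every order (with arbitrary positive step sizes), and then invoking the Hausdorff--Bernstein--Widder characterisation of completely monotone functions for continuous $\varphi^{-1}$ is precisely how Kimberling's original argument closes. The only points I would insist you spell out are the sign bookkeeping under the decreasing change of variables $s_i=\varphi(u_i)$ (the orientation of each hypercube flips, contributing the $(-1)^n$ that matches your pointwise computation) and the observation that surjectivity of $\varphi:(0,1]\to[0,\infty)$ lets the difference condition be tested at every base point and step, so that the full hypothesis of the difference-form Bernstein theorem is met. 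With those details written out, the proof is complete.
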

In turn, the Bernstein-Widden Theorem (Widder 1946)  states that complete monotonicity is equivalent to being the Laplace transform of a non-negative Random variable measure, that is called the frailty distribution. Therefore, given a strict generator $\varphi$, this generates a strict Archimedean copula for every dimension $n$ if and only if the inverse generator $\varphi^{-1}$ is the Laplace transform of a frailty distribution. A discussion on this characterization and some interesting cases where complete monotonicity does not hold are in McNeil and Neslehova (2007).

Before we proceed further we also need some definitions and useful propositions. We follow Sato (1999) and for a probability measure $\mu$ we define its Fourier transform with $\hat{\mu}$.

\begin{definition}
A probability measure $\mu$ is \emph{infinitely divisible} if for each $N\in\Natural$ there exists a probability measure $\mu_N$ such that
\[
\hat{\mu} = (\hat{\mu}_N)^N
\]
\end{definition}

\begin{theorem}
If an Archimedean copula is self chaining, then its frailty distribution is infinitely divisible.
\end{theorem}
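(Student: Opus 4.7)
The idea is to translate the self-chaining identity into a functional equation for the Laplace transform of the frailty, solve it by Cauchy's equation, and then read off infinite divisibility directly.

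First, I would write $g:=\varphi^{-1}$ for the inverse generator of the strict Archimedean copula $C$. By Theorem~\ref{th:kimberling} together with the Bernstein--Widder theorem already invoked in the excerpt, $g$ is the Laplace transform of the frailty measure $\mu$, so $g(s)=\int_0^\infty e^{-sx}\,d\mu(x)$ with $g(0)=1$, and $g$ is a continuous strictly decreasing bijection $[0,\infty)\to(0,1]$. Setting $u_i=g(s_i)$ with $s_i\ge 0$, the self-chaining identity $C(u_1^k,\ldots,u_n^k)=C(u_1,\ldots,u_n)^k$ becomes
\[
g\!\left(\sum_{i=1}^n g^{-1}\bigl(g(s_i)^k\bigr)\right)=g\!\left(\sum_{i=1}^n s_i\right)^{k}.
\]

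Second, I would introduce the auxiliary map $h_k(s):=g^{-1}(g(s)^k)$, which is continuous on $[0,\infty)$ with $h_k(0)=0$ and $h_k(s)>0$ for $s>0$ (since $g(s)\in(0,1)$ forces $g(s)^k\in(0,1)$). Specialising the previous display to $n=2$ and applying $g^{-1}$ yields Cauchy's equation on the half line,
\[
h_k(s_1+s_2)=h_k(s_1)+h_k(s_2),\qquad s_1,s_2\ge 0.
\]
Continuity then forces $h_k(s)=c(k)\,s$ with $c(k):=h_k(1)>0$, so that
\[
g(s)^k=g\bigl(c(k)\,s\bigr),\qquad\text{equivalently}\qquad g(s)=g\!\left(\frac{s}{c(k)}\right)^{\!k}\!.
\]

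Third, I would conclude infinite divisibility. For each $N\in\Natural$, let $\mu_N$ be the push-forward of $\mu$ under the dilation $x\mapsto x/c(N)$. Then $\mu_N$ is a probability measure on $[0,\infty)$ whose Laplace transform is
\[
\mathcal{L}\mu_N(s)=\int_0^\infty e^{-sx/c(N)}\,d\mu(x)=g\!\left(\frac{s}{c(N)}\right),
\]
and the previous display gives $g=(\mathcal{L}\mu_N)^{N}$. Because for probability measures supported on $[0,\infty)$ infinite divisibility in the Fourier/characteristic-function sense is equivalent to its Laplace counterpart (the Laplace transform determines the characteristic function by analytic continuation), this reads $\hat\mu=(\hat\mu_N)^{N}$, so $\mu$ is infinitely divisible.

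The main obstacle I anticipate is purely technical: justifying the passage from the additive equation to linearity, i.e.\ that $h_k$ is continuous (or at least measurable) so that Cauchy's equation admits only the trivial solutions, and that $c(k)$ is strictly positive. Both facts follow from the strict monotonicity and continuity of $g$ guaranteed in the strict Archimedean setting covered by Kimberling's theorem. One should also observe in passing that the multiplicative relation $c(k_1 k_2)=c(k_1)c(k_2)$ follows by iterating the identity $g(s)^{k}=g(c(k)s)$, which both dovetails with the extension-to-real-exponents remark given earlier and indicates that $c(k)$ will turn out to be a power of $k$ in the eventual complete classification.
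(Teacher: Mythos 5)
Your proof is correct, and it reaches the conclusion by a genuinely different route from the paper's. The paper argues pointwise: for each $z$ it sets $u=\varphi^{-1}(z/2)$, applies the self-chaining identity at the diagonal point $\bigl(u^{1/N},u^{1/N}\bigr)$ to obtain $\varphi^{-1}(z)=\varphi^{-1}_N(z)^N$ with $\varphi^{-1}_N(z)=\varphi^{-1}\bigl(2\varphi(\varphi^{-1}(z/2)^{1/N})\bigr)$, and then asserts that this factorisation passes to the characteristic function. What the paper does \emph{not} do at this stage is verify that $\varphi^{-1}_N$ is itself the Laplace transform of a probability measure, which is exactly what infinite divisibility requires; that identification only becomes visible in the later semi-stability argument. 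You instead convert self-chaining into the additive Cauchy equation $h_k(s_1+s_2)=h_k(s_1)+h_k(s_2)$ for $h_k=g^{-1}\bigl(g(\cdot)^k\bigr)$, deduce $g(s)^k=g(c(k)s)$, and exhibit the $N$-th convolution root explicitly as the image of $\mu$ under $x\mapsto x/c(N)$ --- a bona fide probability measure on $[0,\infty)$ --- so that $\mu=\mu_N^{*N}$ and infinite divisibility is immediate. This is tighter than the paper's argument and in fact proves more: the relation $g(s)^k=g(c(k)s)$ for all $k>0$ is already the strict stability of the frailty, which the paper only establishes in two further theorems; the linearity-from-Cauchy step is the same device the paper deploys later via its auxiliary function $G_N$, so in effect you have front-loaded the stronger result. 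The only points to make airtight are the ones you flag yourself: continuity and positivity of $h_k$ (immediate from the continuity and strict monotonicity of $g$ and $g^{-1}$ for a strict generator), and the passage from Laplace transforms to the measures themselves, for which the cleanest phrasing is that $\mu$ and $\mu_N^{*N}$ are two probability measures on $[0,\infty)$ with the same Laplace transform, hence equal, so that $\hat\mu=(\hat\mu_N)^N$ follows without any analytic continuation.
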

\begin{proof}
For all $z\in[0,+\infty)$ set $u=\phi^{-1}(z/2)\in(0,1]$. Now write
\[
\phi^{-1}(z)^{1/N} = \phi^{-1}(2\phi(u))^{1/N} = \phi^{-1}(2\phi(u^N)) = \phi^{-1}(2\phi(\phi^{-1}(z/2)^{1/N}))
\]
Therefore, if we set $\phi^{-1}_N(z) = \phi^{-1}(2\phi(\phi^{-1}(z/2)^{1/N}))$ we can write
\[
\phi^{-1}(z) = \phi^{-1}_N(z)^N
\]
Since the inverse generator is the Laplace transform of the frailty, the same relationship will hold for the characteristic function. Therefore the frailty is infinitely divisible.
\end{proof}

\begin{definition}
Let $\mu$ be an infinitely divisible probability measure or $\Real$. It is called \emph{strictly stable} if, for all $a>0$, there is $b>0$ such that
\[
\hat{\mu}(z)^a = \hat{\mu}(bz)\text{.}
\]
It is called \emph{strictly semi-stable} if for some $a>0$ with $\alpha\neq 1$, there is $b>0$ satisfying the above relationship. The stability index is the value $\alpha$ that solves
\[
a\left| b \right|^{-\alpha} = 1
\]
\end{definition}

\begin{theorem}
If an Archimedean copula is self chaining, then its frailty distribution is strictly semi-stable.
\end{theorem}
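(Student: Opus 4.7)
The plan is to turn the self-chaining identity into a scaling relation for the Laplace transform of the frailty, and read off strict semi-stability directly. Throughout, I denote the frailty measure by $\mu$, so that $\phi^{-1} = \mathcal{L}\mu$.

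First, I would convert the copula identity into a functional equation on the generator. Substituting the Archimedean form $C(u_1,u_2)=\phi^{-1}(\phi(u_1)+\phi(u_2))$ into the self-chaining identity $C(u_1^k,u_2^k)=C(u_1,u_2)^k$ and applying $\phi$ on both sides yields
\[
\phi(u_1^k)+\phi(u_2^k)=\phi\bigl(\phi^{-1}(\phi(u_1)+\phi(u_2))^k\bigr).
\]
Setting $x=\phi(u_1)$, $y=\phi(u_2)$ and defining $h_k(z):=\phi\bigl(\phi^{-1}(z)^k\bigr)$, this collapses to Cauchy's functional equation
\[
h_k(x+y)=h_k(x)+h_k(y),\qquad x,y\in[0,\infty).
\]

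Second, I would use continuity and monotonicity of $h_k$ (inherited from continuity of $\phi$ and $\phi^{-1}$), together with the boundary value $h_k(0)=\phi(1)=0$, to conclude that $h_k(z)=c_k z$ for some $c_k>0$ (the degenerate case $c_k=0$ would force $\phi^{-1}\equiv 1$). Equivalently,
\[
\phi^{-1}(c_k z)=\phi^{-1}(z)^k.
\]
Since $\phi^{-1}=\mathcal{L}\mu$ by Kimberling's theorem, this scaling identity reads
\[
\mathcal{L}\mu(c_k z)=\bigl(\mathcal{L}\mu(z)\bigr)^k \qquad (z\geq 0).
\]

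Third, I would transfer this relation from the Laplace to the Fourier side. Because $\mu$ is supported on $[0,\infty)$, its Laplace transform determines $\mu$ uniquely and extends analytically to a neighbourhood of the imaginary axis; equivalently, for integer $k$ the identity above says the $k$-fold convolution $\mu^{\ast k}$ equals the dilation of $\mu$ by $c_k$, which at the level of characteristic functions gives $\hat\mu(c_k w)=\hat\mu(w)^k$ for all $w\in\Real$. Taking $k=2$ (distinct from $1$) exhibits the pair $(a,b)=(2,c_2)$ in the definition of strict semi-stability; combined with the infinite divisibility of $\mu$ established in the preceding theorem, this closes the argument.

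The main obstacle, I expect, is not any single estimate but the regularity step in passing from Cauchy's equation to linearity: one needs enough structure on $h_k$ (monotonicity or continuity, both of which follow from the generator being a continuous bijection) to discard the pathological additive solutions. The Laplace-to-Fourier transfer is then essentially bookkeeping, and one could even sharpen the conclusion to strict stability by invoking the extension of the self-chaining relation to all positive real exponents noted in the Remark after Proposition~\ref{eq:copulacond}, but strict semi-stability already suffices for the present statement.
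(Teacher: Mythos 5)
Your proof is correct, but it extracts the key scaling relation for the inverse generator by a genuinely different mechanism than the paper. The paper evaluates the copula on the diagonal with $N$ coordinates equal to $u$ and the rest equal to $1$, obtaining $\phi^{-1}(N\phi(u^\ell))=\phi^{-1}(N\phi(u))^\ell$, and then exploits homogeneity in the \emph{exponent} $\ell$ (valid for all real $\ell>0$) to force the auxiliary map $x\mapsto \log\phi^{-1}(N\phi(e^x))$ to be linear; this yields $\varphi^{-1}(x/N)=[\varphi^{-1}(x)]^{-1/k_N}$ with spans $1/N$ and epochs $-1/k_N$. You instead fix the exponent $k$, use only the \emph{bivariate} copula, and exploit additivity in the generator variable: the substitution $x=\phi(u_1)$, $y=\phi(u_2)$ turns self-chaining into Cauchy's equation for $h_k(z)=\phi(\phi^{-1}(z)^k)$ on $[0,\infty)$, and monotonicity (correctly noted, since $\phi$ and $\phi^{-1}$ are strictly decreasing and $t\mapsto t^k$ is increasing) rules out pathological solutions, giving $\phi^{-1}(c_kz)=\phi^{-1}(z)^k$. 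The two routes land on the same functional equation for $\varphi^{-1}$, just with inverse parametrisations of the span/epoch family. Your version is more economical: it needs only dimension $2$, a single non-trivial exponent $k=2$ to exhibit semi-stability, and your Laplace-to-Fourier transfer (via identifying $T_{c_k}\mu$ with $\mu^{\ast k}$ as measures on $[0,\infty)$) is more rigorous than the paper's bare assertion that ``the relationship carries to the characteristic function.'' What the paper's route buys in exchange is the explicit family of spans $b_N=1/N$ for all $N\in\Natural$, which is precisely what its subsequent theorem combines (spans $M$ and $1/N$) to upgrade semi-stability to strict stability; if you wanted to feed into that later argument you would need to note, as you do in passing, that your $c_k$ ranges over a sufficiently rich set of spans once $k$ runs over all positive reals.
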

\begin{proof}
Consider an Archimedean copula with generator $\varphi$, and define a set if indices $I$. Then, for a vector $u=\{u_i\}_{i\in I} \in [0,1]^I$ the copula will be given by
\[
C\left(\{u_i\}_{i\in I}\right) = \phi^{-1}\left( \sum_{i\in I} \phi(u_i) \right)
\]
For the copula to have the self-chaining property we demand that for all $\ell > 0$
\[
C\left(\{u^\ell_i\}_{i\in I}\right)= C\left(\{u_i\}_{i\in I}\right)^\ell
\]
For each $N\in\Natural$ we therefore compute the copula value at a point with $N$ abscissas equal to $u$ and the rest equal to one. Thus, the self-chaining property implies
\[
\phi^{-1}\left( N \phi(u^\ell) \right) = \phi^{-1}\left( N \phi(u) \right)^\ell
\]

We now define an auxiliary function $g$ and its inverse as
\begin{align*}
g &: [-\infty,0] \rightarrow [0,+\infty] : x \rightarrow \phi( \exp x )\\
g^{-1} &: [0,+\infty] \rightarrow [-\infty,0] : y \rightarrow \log \phi^{-1}(y)
\end{align*}
Taking logs of both sides of the self-chaining property allows us to write it in terms of $g$ in the following way
\[
g^{-1} \left( N g(\ell x) \right) = \ell g^{-1} \left( N g(x) \right)
\]
Thus, if we define a second auxiliary function $G_N(x) = g^{-1}(N g(x))$ the above relationship becomes
\[
G_N(\ell x) = \ell G_N(x) \quad \forall \ell > 0, x<0
\]
This implies that $G_N$ is a linear function without a constant. To see that, set $x=-1$, $z=-\ell$ and $k_N = G_N(-1)$. Then write
\[
G_N(z) = -z G_N(-1) = - k_N z \quad \forall z < 0
\]

Having established the form of $G_N$ we substitute backwards. writing $g^{-1}(N g(z)) = - k_N z$, and by the change of variable $x = Ng(z)$ we write that
\[
g^{-1}(x) = -k_N g^{-1}(x/N)
\]
As $g^{-1}(x) = \log \varphi^{-1}(x)$ we conclude that self-chaining implies a generator that satisfies the relationship
\[
\left[ \varphi^{-1}(x)\right]^{-1/k_N} = \varphi^{-1}(x/N)
\]

The inverse generator is the Laplace transform of the frailty distribution. The Fourier transform will be given of course by $\hat{\mu}(z) = \phi^{-1}(-\ii z)$. The relationship will then carry onwards to the characteristic function of the frailty.

We have therefore shown that there exist \emph{epochs} $a_N = -1/k_N$, for which we set \emph{spans} $b_N = 1/N$, satisfying the relationship
\[
F(x)^{a_N} = F(b_Nx)
\]
By definition we conclude that the frailty distribution is \emph{strictly semi-stable}. We denote the stability index with $\alpha$.
\end{proof}

From the definition of the stability index we can express the relationship between the spans and their corresponding epochs. In particular
\[
a_N\left|b_N\right|^{-\alpha} = 1 \Rightarrow k_N = - N^\alpha
\]

It is possible to characterize the frailty distribution even further, since the generator of the frailty distribution is continuous. To do so we also need to define the measure operator $T_r$ for each $r>0$, which transforms a measure $\mu$ on $\Real$ as
\[
(T_r \mu) (B) = \mu(r^{-1} B) \text{, for Borel }B\in\mathcal{B}(\Real)
\]

In addition we provide a useful theorem (Sato, Thm 14.3)
\begin{theorem}
Let $\mu$ be infinitely divisible with Levy measure $\nu$. Let $0<\alpha<2$.
\begin{enumerate}
\item $\mu$ is $\alpha$-semi-stable with $b>1$ as a span iff $\nu$ satisfies
\[
\nu = b^{-\alpha}T_b \nu
\]
\item $\mu$ is $\alpha$-stable iff the above relationship holds for all $b>0$.
\end{enumerate}
\end{theorem}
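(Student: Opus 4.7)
The plan is to prove both parts simultaneously via the Lévy–Khintchine representation of infinitely divisible distributions. Writing $\hat\mu(z) = \exp(\psi(z))$ with
$$\psi(z) = -\tfrac{1}{2}Az^{2} + \ii\gamma z + \int_{\Real}\bigl(e^{\ii z x} - 1 - \ii z x\,\mathbf{1}_{|x|\le 1}\bigr)\,\nu(dx),$$
the $\alpha$-semi-stability relation $\hat\mu(z)^{a} = \hat\mu(bz)$ with $a = b^{\alpha}$ is equivalent to the functional equation $b^{\alpha}\psi(z) = \psi(bz)$. The strategy is to decompose $\psi(bz) - b^{\alpha}\psi(z)$ into Gaussian, drift, and jump components, and force each to vanish separately by uniqueness of the Lévy–Khintchine triplet $(A,\gamma,\nu)$.

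First I would perform the change of variables $x = y/b$ in the jump integral of $\psi(bz)$ to rewrite it as
$$\int_{\Real}\bigl(e^{\ii z y} - 1 - \ii z y\,\mathbf{1}_{|y|\le b}\bigr)\,(T_{b}\nu)(dy).$$
Matching the coefficient of $z^{2}$ gives $A b^{2} = b^{\alpha} A$, so for $0 < \alpha < 2$ we must have $A = 0$. The remaining jump comparison can then be recast, by absorbing the truncation mismatch between $\mathbf{1}_{|y|\le 1}$ and $\mathbf{1}_{|y|\le b}$ into a linear drift correction, as the single requirement that the Lévy measures agree: $T_{b}\nu = b^{\alpha}\nu$, i.e.\ $\nu = b^{-\alpha}T_{b}\nu$. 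The drift $\gamma$ is then pinned down (or left free) by whatever residual linear-in-$z$ term remains. For the converse, starting from $\nu = b^{-\alpha}T_{b}\nu$ and $A = 0$, substituting into the Lévy–Khintchine formula and reversing the change of variables shows directly that $\psi(bz) = b^{\alpha}\psi(z)$, establishing semi-stability with the chosen span $b$. Part 2 is then immediate: $\alpha$-stability demands $\hat\mu(z)^{b^{\alpha}} = \hat\mu(bz)$ for \emph{every} $b > 0$, so the same chain of equivalences run uniformly in $b$ yields $\nu = b^{-\alpha}T_{b}\nu$ for all $b > 0$.

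The main obstacle will be the truncation bookkeeping: because $\mathbf{1}_{|x|\le 1}$ is not scale-invariant, the substitution $x \mapsto y/b$ generates a spurious linear-in-$z$ term that must be absorbed cleanly into $\gamma$ without contaminating the measure-theoretic condition on $\nu$. The case $\alpha = 1$ is notoriously delicate, because the drift then scales in the same way as $\psi$ itself and a compensating logarithmic term appears; this is why the semi-stable definition quoted in the excerpt excludes that exponent. Once the drift accounting is carried out carefully on the two sides of $b^{\alpha}\psi(z) = \psi(bz)$ and uniqueness of the Lévy–Khintchine triplet is invoked, both equivalences in the theorem follow without further work.
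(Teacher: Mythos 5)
The paper does not prove this statement at all: it is quoted as Theorem 14.3 of Sato (1999) and used as a black box, so there is no in-paper argument to compare yours against. Judged on its own terms, your plan is the standard textbook proof and its skeleton is sound: the pushforward of the jump integral under $x\mapsto bx$ does produce $T_b\nu$ with the truncation moved to $\{|y|\le b\}$; the mismatch with $\mathbf{1}_{|y|\le 1}$ is integrable because a L\'evy measure is finite on $\{1\wedge b\le |y|\le 1\vee b\}$, hence absorbable into the drift; the coefficient comparison $Ab^2=b^{\alpha}A$ kills the Gaussian part precisely because $0<\alpha<2$ and $b\neq 1$; and uniqueness of the generating triplet, applied to the two infinitely divisible laws $T_b\mu$ and $\mu^{*b^{\alpha}}$, legitimises matching the Gaussian, drift and jump pieces separately.

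Two caveats. First, the equivalence as transcribed is not quite what your argument proves: the forward direction yields $A=0$ \emph{and} $\nu=b^{-\alpha}T_b\nu$, and the converse genuinely needs $A=0$ as a hypothesis (a Gaussian convolved with an $\alpha$-stable law satisfies the L\'evy-measure relation but is not $\alpha$-semi-stable). Sato's original statement includes $A=0$; the version quoted here drops it, and your converse silently reinstates it --- you should make that explicit. Second, the drift is where the real content hides: after the change of variables the residual linear term is $(\gamma b-c_b-b^{\alpha}\gamma)z$ with $c_b$ the truncation correction, and it imposes no constraint only if the definition of semi-stability permits an extra factor $e^{\ii cz}$; for the \emph{strict} notion the paper actually defines (no such factor), this becomes an additional condition on $\gamma$, so ``pinned down (or left free)'' is precisely the distinction between semi-stable and strictly semi-stable that your proof must settle rather than defer. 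A minor point: the exclusion in the quoted definition should read $a\neq 1$ (to rule out the trivial relation), not $\alpha\neq 1$; the index $\alpha=1$ is covered by the theorem, and the genuine $\alpha=1$ delicacy you mention concerns the centering condition for strictness, not the L\'evy-measure characterisation itself.
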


\begin{theorem}
If a strict Archimedean copula is self chaining then its frailty distribution is strictly stable.
\end{theorem}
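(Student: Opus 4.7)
The plan is to bootstrap the semi-stability relation established in the previous theorem from integer spans $b_N = 1/N$ to arbitrary positive real spans. Concretely, the previous theorem gave us
\[
\varphi^{-1}(x/N) = \varphi^{-1}(x)^{1/N^\alpha}
\]
for every positive integer $N$, where $\alpha$ is the stability index. Substituting $x \mapsto Nx$ in this identity immediately yields the dual form $\varphi^{-1}(Nx) = \varphi^{-1}(x)^{N^\alpha}$, so the relation $\varphi^{-1}(rx) = \varphi^{-1}(x)^{r^\alpha}$ holds at least for $r = N$ and $r = 1/N$ with $N \in \Natural$.

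The first step is to extend this to all positive rationals. Given integers $M, N \geq 1$, chaining the two identities gives
\[
\varphi^{-1}((M/N)x) = \varphi^{-1}(M \cdot (x/N)) = \varphi^{-1}(x/N)^{M^\alpha} = \varphi^{-1}(x)^{(M/N)^\alpha},
\]
so $\varphi^{-1}(rx) = \varphi^{-1}(x)^{r^\alpha}$ holds for every positive rational $r$. The second step is to extend to all positive real $r$ by continuity: since $\varphi^{-1}$ is the Laplace transform of a probability measure it is continuous on $[0,\infty)$, and the map $r \mapsto \varphi^{-1}(rx)$ together with $r \mapsto \varphi^{-1}(x)^{r^\alpha}$ are both continuous in $r$, so agreement on the dense set of positive rationals forces agreement on all of $(0,\infty)$.

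The third step is to translate this scaling identity on the inverse generator into the language of characteristic functions. Writing $\hat{\mu}(z) = \varphi^{-1}(-\ii z)$ via analytic continuation of the Laplace transform, the identity $\varphi^{-1}(rx) = \varphi^{-1}(x)^{r^\alpha}$ becomes $\hat{\mu}(rz) = \hat{\mu}(z)^{r^\alpha}$ for every $r>0$. Given any $a > 0$, setting $b := a^{1/\alpha}$ yields $\hat{\mu}(bz) = \hat{\mu}(z)^{b^\alpha} = \hat{\mu}(z)^a$, which is exactly the definition of strict $\alpha$-stability. Hence the frailty distribution is strictly stable with stability index $\alpha$.

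The main obstacle is the density argument in the second step: one must confirm that the continuity of $\varphi^{-1}$ on $[0,\infty)$ really suffices to propagate the functional equation from the rationals to the reals, and that the passage from the Laplace transform identity to the characteristic function identity is legitimate (which is standard because the Laplace transform of a non-negative measure extends holomorphically to the right half-plane and determines the characteristic function uniquely on the imaginary axis). Once those two points are dispatched, the rest is purely algebraic manipulation of the semi-stability relation already derived.
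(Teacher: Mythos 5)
Your proof is correct, and the overall skeleton is the one the paper uses: combine the integer spans $b_N=1/N$ (and their reciprocals $N$) to get semi-stability for all positive rational spans $M/N$, then pass to all real spans by density and continuity. Where you differ is in the technical vehicle. The paper carries out this bootstrap at the level of the L\'evy measure, invoking Sato's Theorem 14.3 (the characterization $\nu = b^{-\alpha}T_b\nu$ of $\alpha$-semi-stability with span $b$) and composing the operators $T_M$ and $T_{1/N}$; it then asserts the extension to all $b>0$ ``by continuity'' of that measure identity. You instead stay entirely with the functional equation $\varphi^{-1}(x/N)=\varphi^{-1}(x)^{1/N^{\alpha}}$ on the inverse generator, chain it algebraically to $\varphi^{-1}(rx)=\varphi^{-1}(x)^{r^{\alpha}}$ for rational $r$, and use the genuine pointwise continuity of the Laplace transform to reach all real $r$, finally exhibiting the span $b=a^{1/\alpha}$ for each epoch $a$ directly from the definition of strict stability. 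Your route is more elementary and self-contained (it never needs the L\'evy-measure machinery), and your continuity step is arguably cleaner than the paper's, since continuity of $r\mapsto\varphi^{-1}(rx)$ is immediate whereas continuity of $b\mapsto(T_b\nu)(B)$ for arbitrary Borel $B$ requires a little care. The one point you should not gloss over is the passage from the Laplace-transform identity to the characteristic-function identity $\hat{\mu}(bz)=\hat{\mu}(z)^{a}$: you flag it as standard via analytic continuation, and that is acceptable here because the paper itself invokes exactly the same step at the same level of rigor in the preceding semi-stability theorem; a fully rigorous treatment would note that $\hat{\mu}$ is non-vanishing (the frailty being infinitely divisible), so the identity can be stated for a well-defined $\log\hat{\mu}$ and extended by uniqueness of holomorphic continuation to the closed right half-plane.
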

\begin{proof}
We have shown that the frailty is strictly semi-stable, with spans $b_N = 1/N$ and associated epochs $a_N = -(1/N)^\alpha$, for all $N\in \Natural$. It is also straightforward to verify that the quantities $\tilde{b}_N=N$ are also valid spans, with associated epochs $\tilde{a}_N = -N^\alpha$.

Therefore, for any rational number $b = M/N$ and any Borel $B\in\mathcal{B}(\Real)$ we can write
\[
b^{-\alpha} \left( T_b \nu \right)(B) = \left( \frac{M}{N} \right)^{-\alpha} \left( T_{M/N} \nu\right)(B) = \left( \frac{M}{N} \right)^{-\alpha} \left( T_M T_{1/N} \nu \right)(B) = \nu(B)
\]

Since the distribution is strictly semi-stable for all spans $b=M/N$, $M,N\in\Natural$, by continuity it is strictly semi-stable for all spans $b>0$, and is therefore \emph{strictly stable}.
\end{proof}

Finally, we can characterize continuous self-chaining copulas completely.

\begin{theorem}
If a strict Archimedean copula is self chaining, then it is the Gumbel-Hougaard copula.
\end{theorem}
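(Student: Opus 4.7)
The plan is to convert the strict $\alpha$-stability of the frailty, just established, into an explicit formula for the inverse generator $\varphi^{-1}$, and then recognise the resulting Archimedean copula. Since $\varphi^{-1}$ is the Laplace transform of the (positive) frailty distribution, strict $\alpha$-stability is equivalent to the full scaling identity
\[
\varphi^{-1}(bs) \;=\; \varphi^{-1}(s)^{b^{\alpha}} \qquad \text{for all } s \ge 0,\; b > 0.
\]
This is the continuous upgrade of the semi-stable relation $\varphi^{-1}(s/N) = \varphi^{-1}(s)^{1/N^{\alpha}}$ derived in the proof of strict semi-stability: combining the spans $b=1/N$ and $b=N$ yields the relation on all positive rationals, and continuity of the Laplace transform (equivalently, the Levy-measure scaling of the preceding theorem) extends it to every positive real $b$. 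Since the frailty is supported on $[0,\infty)$, the index must satisfy $\alpha \in (0,1]$.

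Set $h(s) := -\log \varphi^{-1}(s)$, a continuous nonnegative function with $h(0)=0$. The scaling identity becomes the power-homogeneity equation
\[
h(bs) \;=\; b^{\alpha}\, h(s), \qquad b > 0,\; s \ge 0.
\]
Evaluating at $s=1$ and writing $c := h(1) \ge 0$ gives $h(s) = c\, s^{\alpha}$, hence
\[
\varphi^{-1}(s) \;=\; \exp\!\left(-c\, s^{\alpha}\right), \qquad \varphi(u) \;=\; c^{-1/\alpha}\,(-\log u)^{1/\alpha}.
\]
The degenerate case $c=0$ forces $\varphi^{-1}\equiv 1$, which does not produce a copula; for any genuine strict Archimedean copula we therefore have $c>0$.

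Archimedean copulas are invariant under multiplication of the generator by a positive constant, so $C$ coincides with the copula generated by $(-\log u)^{\theta}$ with $\theta := 1/\alpha \in [1,\infty)$, i.e.
\[
C(u_1,\ldots,u_n) \;=\; \exp\!\left(-\left[(-\log u_1)^{\theta} + \cdots + (-\log u_n)^{\theta}\right]^{1/\theta}\right),
\]
which is precisely the Gumbel-Hougaard copula with parameter $\theta$. The principal technical hurdle is the passage from the discrete spans $b = 1/N$ (semi-stability) to arbitrary $b>0$ (full stability); this is exactly what the strict-stability theorem supplied through Sato's Theorem 14.3 applied to the Levy measure, so the previous two theorems are not detours but the essential stepping stones that turn the self-chaining functional equation into a one-parameter family solvable in closed form.
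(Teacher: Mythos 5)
Your proof is correct, and it reaches the Gumbel--Hougaard form by a genuinely different (and more elementary) final step than the paper. The paper converts ``strictly stable frailty with positive support'' into the explicit L\'evy--Khintchine parametrization of stable characteristic functions, sets $\beta=1$, $\gamma=0$, and reads off the Laplace transform $\exp\{-c|z|^{\alpha}(1+\tan\frac{\pi\alpha}{2})\}$, excluding $\alpha=1$ and arguing $\alpha<1$ from decay of that expression. You instead take the scaling relation $\varphi^{-1}(s/N)=\varphi^{-1}(s)^{1/N^{\alpha}}$ already obtained in the semi-stability step, extend it to all positive rational and then real spans by continuity of the Laplace transform, and solve the resulting power-homogeneity equation $h(bs)=b^{\alpha}h(s)$ for $h=-\log\varphi^{-1}$ directly, getting $h(s)=cs^{\alpha}$ by evaluation at $s=1$. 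This buys several things: it is self-contained (no appeal to the closed-form stable parametrization, and the L\'evy-measure argument of the preceding theorem becomes optional rather than essential); it handles $\alpha=1$ gracefully as the independence copula, i.e.\ Gumbel--Hougaard with $\theta=1$, where the paper's Laplace-transform formula is undefined; and your route to $\alpha\in(0,1]$ via positivity of the support (or, equivalently, via complete monotonicity of $\exp(-cs^{\alpha})$) is cleaner than the paper's $1+\tan\frac{\pi\alpha}{2}>0$ argument, which as stated does not actually rule out all of $\alpha\in(1,2)$. What the paper's approach buys in exchange is the explicit identification of the frailty as a positive $\alpha$-stable subordinator within the standard classification. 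One small point you could make explicit: the passage from $h(s)=cs^{\alpha}$ to the Gumbel generator uses that Archimedean generators are unique only up to a positive multiplicative constant, which you do state; and you correctly dispose of the degenerate case $c=0$.
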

\begin{proof}
Having established that the frailty is strictly stable with positive support we can write the characteristic function in closed form. This will in turn determine the generator.

In general, stable distributions will have characteristic functions given by
\begin{align*}
L(-\ii z) &= \exp \left\{ -c|z|^\alpha \left( 1-\ii \beta \tan \frac{\pi\alpha}{2} \sgn z \right)  + \ii \gamma z\right\}\text{, if }\alpha\neq 1\\
L(-\ii z) &= \exp \left\{ -c|z| \left( 1+\ii \beta \frac{2}{\pi}\sgn z \log |z| \right)  + \ii \gamma z\right\}\text{, if }\alpha= 1
\end{align*}
for $c>0$, $\beta \in [-1,1]$ and $\gamma\in\Real$. These three parameters, together with the stability index $\alpha$ determine the density.

As the frailty has positive support the restrictions $\beta=1$ and $\gamma=0$ are imposed. Under these constraints the Laplace transform, which will also be the frailty generator, will be defined only for $\alpha \neq 1$, and is given by
\[
L(z) = \exp \left\{ -c|z|^\alpha \left( 1+ \tan \frac{\pi\alpha}{2} \right) \right\}\text{, for }\alpha\neq 1
\]

Also, as the Laplace transform is decaying the further constraint
\[
1+ \tan \frac{\pi\alpha}{2} > 0 \Rightarrow \alpha < 1
\]
In this case the quantity $\delta = c\left(1+ \tan \frac{\pi\alpha}{2}\right) > 0$ is a constant, and the inverse generator takes the Gumbel-Hougaard form.
\end{proof}

\section{Relationship with extreme value copulas}\label{sec:sccevc}
We found out later, after proving the above result, that such result is already known in a different context. In particular, it is a result associated with max stable copulas (MSC) and Extreme Value Coupulas. A MSC is a copula $C$ that is obtained as follows. Consider a multivariate distribution for a random vector
$[X_1,\ldots,X_n]$ having $C=C_X$ as copula, take $k$ iid copies of the vector given by 
\[ \left[X_{i,1},  X_{i,2} , \ldots , X_{i,n} \right], \ \ \   i=1,\ldots,k\]
and define
\[ M_{kj} = \max\left(\begin{array}{c}X_{1,j} \\X_{2,j}\\ \vdots \\ X_{k,j} \end{array}\right) \ \ for \   j \le  n .\] 
Consider the copula 
\[ C_M(u) = C_{M_{k,1},M_{k,2},\ldots, M_{k,n}}(u_1,\ldots,u_n) = C_{X_{1},X_{2},\ldots, X_{n}}(u_1^k,\ldots,u_n^k)^{1/k} \]
The copula $C_X$ is said to be Max-Stable if 
\[   C_X(u) = C_M(u)   \]
namely if 
\[  C(u_1^k,\ldots,u_n^k)^{1/k} =  C(u_1,\ldots,u_n)\]
which is precisely our definition of self chaining copula. 

One can see that the max stable copulas (or, equivalently, extreme value copulas, see Theorem 3.3.5 in \cite{Nel}) follow the same characterization of our self-chaining copulas above, and it is known that the only archimedean max-stable copula is the Gumbel-Hougaard copula, see \cite{genestrivest} or Theorem 4.5.2 in \cite{Nel}.

Besides the other case of the Marshall-Olkin copula, featuring a discrete component, we may fully characterize Extreme Value (and hence Self-Chaining) copulas by means of the Pickands function characterization.

Indeed, taking for simplicity again the bivariate case, basically our earlier homogeneity characterization translates in the two-variables homogeneous function being expressed as a one-variable function of the ratio. Such function consitutes the Pickands function of the copula, and Extreme Value Copulas (and hence Self-Chaining Copulas) can be fully characterized in terms of such function.  For the multivariate case, the Pickands function is the restriction of the $n$-variables homogeneous function to the unit simplex.  For the details and the Pickands function properties see for example \cite{gudendorf} or \cite{mai}.  

\section{Conclusions, practical applications and further research}\label{sec:conclu}
This work investigates dependence across marginally exponentialy distributed inter-arrival times, for example default times in financial modeling or inter-failure times in reliability theory. 

In many applications one needs to simulate the random times progressively, checking "survival" in subsequent intervals 
\[ [0, T), \  [T, 2T),\   \  [2T, 3T), \ldots, \  [(N-1)T, N T]    \]
rather than at the final time $N T$ directly.  If one assumes exponential marginals for the random times and a given dependence structure (copula) in checking whether the arrival times "survive" each subinterval\\ $[(i-1) T, i T)$ conditional on having survived  $(i-1)T$, $i=1,\ldots,N$, can one deduce the properties the dependence structure needs to feature to be consistent in the interval $[0, N T]$ as a whole?
As we have shown above, this condition is not easily satisfied. For example, iterating a Gaussian copula is not consistent with a Gaussian copula for  the total $[0, \ NT]$ interval, and in fact the inconsistent dependence obtained via iteration will be weaker than the dependence implied by the same Gaussian copula applied directly and consistently to the single $[0, \ NT]$, as we have seen in Example \ref{ex:iter} above. 

The self-chaining condition we introduce in this paper is necessary and sufficient to guarantee that a multivariate lack of memory property holds consistently with a given dependence structure. The dependence structure  is consistent for the iterated and terminal sampling. This means that we may choose the copula in the self-chaining family  and then later decide whether we wish to sample times occurrences in a step by step fashion or in a one-shot fashion {\em with the same copula} without introducing inconsistencies.

We point out that the self-chaining condition is the same as the extreme value copula condition and that, in the context of the Archimedean copulas associated with frailty distributions, the self-chaining condition forces the copula to be the Gumbel-Hougaard copula, thus providing a full characterization. The other known explicit case satisfying the self-chaining condition is outside the Archimedean family and is given by the Marshall-Olkin copula, featuring a discrete component and a positive probability that $\tau_1 = \tau_2$. More generally, self-chaining copulas are characterized via a homogeneity condition for the log-copula, or a related partial differential equation, which translates into the Pickands function characterisation.

Further research should focus on multivariate self-chaining copulas of dimension larger than 2 and with a flexible and rich parametric structure. It would be ideal if the parametric dependence were expressed via a matrix. Two such matrix-parametrized  copulas are the Gaussian and Student-t copulas which, however are not self-chaining.   
A possible research idea is trying to extend the bivariate self-chaining limit of the t copula (see for example \cite{gudendorf}) to the multivariate case.

\end{document}